\definecolor{darkgreen}{cmyk}{1,0,1,.2}
\definecolor{m}{rgb}{1,0.1,1}
\newtheorem{theorem}{Theorem}[section]
\theoremstyle{definition}
\newtheorem{remark}[theorem]{Remark}
\numberwithin{equation}{section}
\numberwithin{figure}{section} 
\numberwithin{table}{section}
\newcommand\st{{\rm st}} 
\newcommand\SO{{\operatorname{SO}}}
\newcommand\R {{\mathbb R}}
\newcommand\N {{\mathbb N}} 
\newcommand\Q{{\mathbb {Q}}}
\newcommand\RRR{\mathbb{I\hskip-3pt R}}
\newcommand\Los{{\L}o{\'s}}
\newcommand\fnull{\mathcal{F}_{\!n\!u\!l\!l}}
\newcommand\fez{\mathcal{F}_{ez}}
\begin{document}

\title{Commuting and non-commuting infinitesimals}

\author[M.~Katz]{Mikhail G. Katz} \address{Department of Mathematics,
Bar Ilan University, Ramat Gan 52900 Israel}
\email{katzmik@macs.biu.ac.il}

\author[E. Leichtnam]{Eric Leichtnam} \address{CNRS et Institut de
Jussieu-Chevaleret, Etage 7 E, 175 Rue du Chevaleret, 75013 Paris}
\email{ericleichtnam(strudel)math.jussieu.fr}

%\subjclass[2000]{Primary 
%%
%26E35;       %non-standard analysis
%Secondary 
%58J42,       %Noncommutative global analysis, noncommutative residues 
%01A85        %historiography
%%03A05,      %philosophical and critical
%%97A20,      %recreational mathematics
%%97C30       %student learning and thinking
%}

%\keywords{Dixmier trace, hyperreals, infinitesimals, noncommutative geometry}

%\date{\today}

\begin{abstract}
Infinitesimals are natural products of the human imagination.  Their
history goes back to the Greek antiquity.  Their role in the calculus
and analysis has seen dramatic ups and downs.  They have stimulated
strong opinions and even vitriol.  Edwin Hewitt developed hyperreal
fields in the 1940s.  Abraham Robinson's infinitesimals date from the
1960s.  A noncommutative version of infinitesimals, due to Alain
Connes, has been in use since the 1990s.  We review some of the
hyperreal concepts, and compare them with some of the concepts
underlying noncommutative geometry.
\end{abstract}

\maketitle 

%\tableofcontents

\section{A brief history of infinitesimals}
\label{one}

A theory of infinitesimals was developed by Abraham Robinson in the
1960s (see \cite{Ro61}, \cite{Ro66}).  In France, Robinson's lead was
followed by the analyst G.~Choquet%
\footnote{See e.g., Choquet's work on ultrafilters \cite{Ch}.
Choquet's constructions were employed and extended by
Mokobodzki~\cite{Mo}.}
and his group.  Alain Connes started his work under Choquet's
leadership.  In 1970, Connes published two texts on hyperreals and
ultrapowers \cite{Co70a, Co70b}.  Some time after Robinson's death in
1974, Connes developed an alternative theory of infinitesimals.
Connes' presentation of his theory is frequenly accompanied by
criticism of Robinson's.

Our goal is to compare the concept of an infinitesimal in Robinson's
hyperreals, with an analogous concept in Connes' noncommutative
geometry.  We shall also review several comments by Connes on the
subject of Robinson's hyperreals, such as the following comment made
in~2000:
\begin{quote}
A nonstandard number is some sort of chimera which is impossible to
grasp and certainly not a concrete object (Connes \cite{Co00A},
\cite{Co00B}).
\end{quote}
D.~Tall describes a concept in cognitive theory he calls a {\em
generic limit concept\/} in the following terms:
\begin{quote}
if a quantity repeatedly gets smaller and smaller and smaller without
ever being zero, then the limiting object is naturally conceptualised
as an extremely small quantity that is not zero (Cornu \cite{Co91}).
Infinitesimal quantities are natural products of the human imagination
derived through combining potentially infinite repetition and the
recognition of its repeating properties (Tall \cite{Ta91},
\cite{Ta09a}).
\end{quote}

Leibniz based the fundamental concepts of the calculus on
infinitesimals, an approach that was followed by l'H\^opital, Johann
Bernoulli, Varignon, and others.  Thus, the ``differential quotient''
(which evolved into the modern derivative) was thought of as a ratio
of infinitesimals, while the integral, as an infinite sum of areas of
infinitely narrow rectangles.

In the context of European mathematics, infinitesimals were already
uppermost in the mind of Nicholas of Cusa in the 15th century.
Nicholas of Cusa thought of the circle as a polygon with infinitely
many sides, inspiring Kepler to formulate his ``bridge of continuity"
(see Baron~\cite[p.~110]{Ba}).  The law of continuity was a heuristic
principle developed by Leibniz.  He formulated it as follows in a 1702
letter to Varignon:
\begin{quote}
The rules of the finite succeed in the [realm of the] infinite
[\ldots] and vice versa the rules of the infinite succeed in the
[realm of the] finite (Leibniz 1702, \cite{Le02}; cf.~Robinson
\cite[p.~262, 266]{Ro66}; Knobloch \cite[p.~67]{Kn}).
\end{quote}
Robinson put it as follows in 1966:
\begin{quote}
Leibniz did say [$\ldots$] that what succeeds for the finite numbers
succeeds also for the infinite numbers and vice versa, and this is
remarkably close to our transfer of statements from~$\R$ to~${}^*\R$
and in the opposite direction.  But to what sort of laws was this
principle supposed to apply? \cite[p.~266]{Ro66} (see also Laugwitz
\cite{Lau92}).
\end{quote}

%An even earlier, recognized pioneer of infinitesimals is Nicole
%Oresme.  Recently it came to light that, over a decade prior to the
%birth of Oresme, Yitzchak HaYisraeli mentioned, in his 1310 book {\em
%Yesod Olam\/} \cite[ p.~10 and fol.]{Hay}, a kind of an infinitesimal
%spherical cap.  He describes a circle shrinking to a pole, and
%mentions, a moment before its disappearance, ``a very, very small
%circle'', such that ``there is none smaller'', with the ``pole at its
%center''.

A century after Leibniz, L.~Carnot and A.-L.~ Cauchy still exploited
the concept of an infinitesimal, generated by a suitable variable
quantity, namely a null sequence (sequence tending to zero).  An
alternative foundation for the calculus in terms of the epsilon, delta
approach was developed by G.~Cantor, R.~Dedekind, and K.~Weierstrass
starting in the 1870s.  As an example of the epsilon-delta method,
consider Cauchy's definition of continuity of a function~$y=f(x)$:
\begin{quote}
{\em an infinitesimal change~$\alpha$ of the independent variable~$x$
always produces an infinitesimal change~$f(x+\alpha)-f(x)$ of the
dependent variable~$y$\/} (cf.~Cauchy \cite[p.~34]{Ca21}).%
\footnote{For details on Cauchy's infinitesimals, see Cutland et
al.~\cite{CKKR}, B\l aszczyk et al.~\cite{BKS}, Borovik et
al.~\cite{BK}.}
\end{quote}
Weierstrass reconstructs Cauchy's infinitesimal definition as follows:
for every~$\epsilon>0$ there exists a~$\delta>0$ such that for
every~$\alpha$, if~$|\alpha|<\delta$
then~$|f(x+\alpha)-f(x)|<\epsilon$.

Before infinitesimals would finally be justified in a mathematically
rigorous fashion by Robinson \cite{Ro66}, they were yet to be derided
as ``paper numbers'', ``cholera bacillus'' of mathematics, and an
``abomination'' by Georg Cantor (see Meschkowski \cite[p.~505]{Mes}),
who had convinced himself of the impossibility of justifying
infinitesimals in such a fashion (see J.~Dauben~\cite{Da96},
P.~Ehrlich \cite{Eh06}, and B\l aszczyk et al.~\cite{BKS} for
details).

\section{Robinson's framework}
\label{rob}

In 1961, Robinson \cite{Ro61} constructed an infinitesimal-enriched
continuum, suitable for use in calculus, analysis, and elsewhere,
based on earlier work by E.~Hewitt \cite{Hew}, J.~\Los{} \cite{Lo},
and others.  In 1962, W.~Luxemburg \cite{Lu62} popularized a
presentation of Robinson's theory in terms of the ultrapower
construction,%
\footnote{Note that both the term ``hyper-real'', and an ultrapower
construction of a hyperreal field, are due to E.~Hewitt in 1948, see
\cite[p.~74]{Hew}.  Luxemburg~\cite{Lu62} also clarified its relation
to the competing construction of Schmieden and Laugwitz \cite{SL},
similarly based on sequences, which used a different kind of filter.}
in the mainstream foundational framework of the Zermelo--Fraenkel set
theory with the axiom of choice (ZFC).  Namely, the hyperreal field is
the quotient of the collection of arbitrary sequences, where a
sequence
\begin{equation}
\label{11}
\langle u_1, u_2, u_3, \ldots \rangle
\end{equation}
converging to zero generates an infinitesimal.  Arithmetic operations
are defined at the level of representing sequences; e.g., addition and
multiplication are defined term-by-term.

To motivate the construction of the hyperreals, note that the
construction can be viewed as a relaxing, or refining, of Cantor's
construction of the reals.  This can be motivated by a discussion of
rates of convergence as follows.  In Cantor's construction, a real
number $u$ is represented by a Cauchy sequence $\langle u_n :
n\in\N\rangle$ of rationals.  But the passage from $\langle
u_n\rangle$ to $u$ in Cantor's construction sacrifices too much
information.  We would like to retain a bit of the information about
the sequence, such as its ``speed of convergence".  This is what one
means by ``relaxing" or ``refining" Cantor's construction
(cf.~Giordano et al.~\cite{GK11}).  When such an additional piece of
information is retained, two different sequences, say $\langle
u_n\rangle$ and $\langle u'_n\rangle$, may both converge to $u$, but
at different speeds.  The corresponding ``numbers" will differ from
$u$ by distinct infinitesimals.  If $\langle u_n\rangle$ converges to
$u$ faster than $\langle u'_n\rangle$, then the corresponding
infinitesimal will be smaller.  The retaining of such additional
information allows one to distinguish between the equivalence class of
$\langle u_n\rangle$ and that of $\langle u'_n\rangle$ and therefore
obtain different hyperreals infinitely close to $u$.

At the formal level, we proceed as follows.  We start with the
ring~$\Q^\N$ of sequences of rational numbers.  Let
\begin{equation}
\label{22}
\mathcal{C}_\Q \subset \Q^\N
\end{equation}
denote the subring consisting of Cauchy sequences.  The reals are by
definition the quotient field
\begin{equation}
\label{realbis}
\R:= \mathcal{C}_\Q / \fnull,
\end{equation}
where~$\fnull$ contains all null sequences (i.e., sequences tending to
zero).  An infinitesimal-enriched extension of~$\Q$ may be obtained by
modifying~\eqref{realbis} as follows.  We consider a
subring~$\fez\subset\fnull$ of sequences that are ``eventually zero'',
i.e., vanish at all but finitely many places.  Then the
quotient~$\mathcal{C}_\Q / \fez$ naturally surjects onto~$\R=
\mathcal{C}_\Q / \fnull$.  The elements in the kernel of the
surjection
\[
\mathcal{C}_\Q/\fez \to \R
\]
are prototypes of infinitesimals.  Note that the
quotient~$\mathcal{C}_\Q/\fez$ is not a field, as~$\fez$ is not a
maximal ideal.  The natural next step is to replace~$\fez$ by a
\emph{maximal} ideal.  It is more convenient to describe the modified
construction using the ring~$\R^\N$ rather than $\mathcal{C}_\Q$
of~\eqref{22}.

We therefore redefine~$\fez$ to be the ring of real sequences
in~$\R^\N$ that eventually vanish, and choose a \emph{maximal} proper
ideal $\mathcal{M}$ so that we have
\begin{equation}
\label{23}
\fez\subset\mathcal{M}\subset\R^\N.
\end{equation}
Then the quotient~$\RRR:=\R^\N/\mathcal{M}$ is a hyperreal field.  The
foundational material needed to ensure the existence of a maximal
ideal~$\mathcal{M}$ satisfing~\eqref{23} is weaker than the axiom of
choice.  This concludes the construction of a hyperreal field~$\RRR$
(``thick-$\R$'') in the traditional foundational framework, ZFC.

Let~$I\subset\RRR$ be the subring consisting of infinitesimal elements
(i.e., elements~$e$ such that~$|e|<\frac{1}{n}$ for all~$n\in\N$).
Denote by~$I^{-1}$ the set of inverses of nonzero elements of~$I$.
The complement~$\RRR\setminus I^{-1}$ consists of all the finite
(sometimes called \emph{limited}) hyperreals.  Constant sequences
provide an inclusion~$\R\subset\RRR$.  Every element of the
complement,~$x\in \RRR\setminus I^{-1}$ is infinitely close to some
real number $x_0\in\R$.  The \emph{standard part function}, denoted
``st'', associates to every finite hyperreal, the unique real
infinitely close to it:
\[
\text{st}:\RRR\setminus I^{-1} \to \R, \text{\;with\;\;} x \mapsto
x_0.
\]
If $x$ happens to be represented by a Cauchy sequence $\langle
x_n:n\in\N\rangle$ then $x_0=\st(x)=\lim_{n\to\infty}x_n$.  More
advanced properties of the hyperreals such as saturation were proved
later (see Keisler \cite{Ke94} for a historical outline).  A helpful
``semicolon'' notation for presenting an extended decimal expansion of
a hyperreal was described by A.~H.~Lightstone~\cite{Li}.  See also
P.~Roquette \cite{Roq} for infinitesimal reminiscences.  A discussion
of infinitesimal optics is in K.~Stroyan \cite{Str},
J.~Keisler~\cite{Ke}, D.~Tall~\cite{Ta80}, and L.~Magnani and
R.~Dossena~\cite{MD, DM}.  Applications of infinitesimal-enriched
continua range from aid in teaching calculus \cite{El, KK1, KK2} to
the Bolzmann equation (see L.~Arkeryd~\cite{Ar81, Ar05}), mathematical
physics (see Albeverio {\em et al.\/} \cite{Alb}); etc.  Edward Nelson
\cite{Ne} in 1977 proposed an alternative to ZFC which is a richer
(more stratified) axiomatisation for set theory, called Internal Set
Theory (IST), more congenial to infinitesimals than ZFC.  A rethinking
of the history, mathematics, and philosophy of infinitesimals has been
undertaken in \cite{BKS}, \cite{BJK}, \cite{BK}, \cite{KK11a},
\cite{KK11b}, \cite{KK11c}, \cite{KK11d}, \cite{KS}, \cite{KT}.
Recently, P.~Ehrlich \cite[Theorem~20]{Eh12} showed that the ordered
field underlying a maximal (i.e., \emph{On}-saturated) hyperreal field
is isomorphic to J. H. Conway's ordered field No, an ordered field
Ehrlich describes as the \emph{absolute arithmetic continuum}.

\section{From physics to noncommuting infinitesimals}
\label{two}

One of the revolutionary observations of 20th century physics is that
observables cannot take scalar values.  Experiments have shown that
position~$X$ and momentum~$P$ satisfy the relation (frequently
described as the uncertainty principle)
\[
[X,P]=i \hbar ,
\]
which expresses such an absence of commutation.

Since addition and multiplication of sequences is term-by-term, the
multiplication discussed in Section~\ref{one} is still commutative.
Now think of the sequence~\eqref{11} as a diagonal matrix
\[
{\rm diag}(u_1, u_2, u_3, \ldots)
\]
of infinite size.  We now enlarge the collection of matrices, by
propagating by an action of~$\SO(\infty)$.  We obtain a vast
collection of noncommuting matrices.  The original sequence can be
retrieved by symmetrisation, followed by applying Spec (the spectrum,
i.e.~calculating the eigenvalues).

At this point, a mathematician worth his ilk supresses the dependence
on the preferred basis, spruces up \emph{matrices} as
\emph{operators}, and recalls a basic fact from functional analysis:
an operator whose~$n$-th eigenvalue tends to zero, is a compact
operator (image of the unit ball is compact).

In practice, Connes chooses a much smaller collection of compact
operators as his class of infinitesimals.  It is defined by sequences
with a specific rate of convergence to~$0$.

Connes published two papers \cite{Co70a, Co70b} on non-standard
analysis in 1970.  The paper \cite{Co70a} is specifically devoted to
the ultrapower approach.  The connection between Connes'
infinitesimals and Robinson's via the sequence defined by the spectrum
is an intriguing one, see Albeverio et al.~\cite{Al96}.%
\footnote{Yamashita \cite{YO, Ya} applied Robinson's hyperreals to
quantum field theory, but did not emphasize the relation to Connes'
noncommutative geometry.}

\section{Noncommutative geometry and non-standard analysis}

Noncommutative geometry is one of the fastest growing theories today,
influential both in mathematics and in physics.  One of the technical
aspects of noncommutative geometry, which plays a role, in particular,
in establishing a mathematical framework for high-energy physics, is
the Dixmier trace.

Both Connes' and Robinson's theories involve a concept of an
infinitesimal, leading to a natural question as to the comparison of
the two.

Note that a compact operator is by definition one whose~$n$-th
eigenvalue tends to zero (at least modulo symmetrisation).  This
motivates viewing a compact operator as an infinitesimal (see
Section~\ref{two}).

Connes exploited the Dixmier trace \cite{Di66} to give a uniform
explanation of the pseudodifferential residue%
\footnote{No understanding of the pseudodifferential residue is
required for understanding the present text.}
of Guillemin and Wodzicki.  The result was immediately hailed as a
major accomplishment.  The exotic traces constructed by J.~Dixmier are
not normal, and have the property of being nonnegative on a compact
positive operator.  In his article {\em Brisure de sym\'etrie\/},
Connes argues that his solution is \emph{substantial and
calculable}~\cite[p.~211]{Co97}, and expresses a disappointment with
an allegedly non-exhibitable nature of Robinson's infinitesimals.
Meanwhile, Dixmier's construction of the trace relies on the choice of
an ultrafilter on the integers.%
\footnote{Connes has shown that a theorem of Mokobodzki
(see~\cite{Me}) provides a limit process (for the Dixmier trace) which
is universally measurable,
%
%not ``uniformly measurable'' !
%
while relying on the continuum hypothesis (no understanding of either
universal measurability or the continuum hypothesis is required in the
sequel).  See Remark~\ref{CH} for more details.}

It should be stated at the outset that the focus of interest of
Connes' noncommutative geometry lies elsewhere.  Still, one of the
building blocks of Connes' theory is a framework incorporating
non-commuting infinitesimals.  In his articles published in refereed
journals, Connes repeatedly stresses the role of Dixmier's traces in
implementing a general framework for working with his infinitesimals.
It is significant to ponder the fact itself of the existence of
Connes' implementation, based, as it is, on Dixmier traces, whose
foundational status is dependent on a choice of an ultrafilter,
closely related to the axiom of choice.

While it has been pointed out that in concrete instances, the Dixmier
trace is replaced by more concrete objects whose foundational status
is more constructive (such a claim ought to be taken with a grain of
salt, as developments in functional analysis frequently involve an
unspoken dependence on such results as the Hahn-Banach theorem,%
\footnote{See Remark~\ref{CH} illustrating Connes' reliance on the
Hahn-Banach theorem.}
similarly non-constructive; be that as it may), it is significant that
Connes indisputably succeeds in implementing a general framework based
on Dixmier traces, inspite of their non-constructive foundational
status.

In this context, it is instructive to examine Connes' published
comments on Robinson's theory, which typically go hand in hand with
Connes' acknowledgment of indebtedness to Dixmier and his traces.%
\footnote{Note that Robinson studied generalized limits in 1964, in
the context of (infinite) real Toeplitz matrices (see~\cite{Ro64});
however a key property of {\em scale invariance\/} is apparently not
present.}
The importance of Dixmier traces in noncommutative geometry was noted
by S.~Albeverio et al.~\cite{Al96}.

\section{Connes' three objections to Robinson's theory}

Connes has expressed himself on a number of occasions on the subject
of Robinson's theory.  Thus, in a 2000 interview, he said:
\begin{quote}
I became aware of an absolutely major flaw in this theory, an
irremediable defect (Connes \cite[p.~16]{CLS}).
\end{quote}
In 2007, Connes worded himself in a more nuanced way:
\begin{quote}
it seemed utterly doomed to failure to try to use non-standard
analysis to do physics (Connes \cite[p.~26]{Co07}),
\end{quote}
apparently implying that the alleged shortcomings of Robinson's theory
are limited specifically to potential applications in \emph{physics}.%
\footnote{Recently, S. Kutateladze reacted as follows to Connes'
comment: ``\emph{Physics} meant \emph{mechanics} for about two
centuries.  Newton and Leibniz and their followers used heuristic
infinitesimals, which were ultimately implemented mathematically by
Robinson. Celestial mechanics and hydrodynamics still require
infinitesimals in much the same way as in Leibniz."}

In 1995, Connes gave a detailed account of the role of the Dixmier
trace in his theory.  Connes states that the goal \cite[section
II]{Co95} is to develop a ``calculus of infinitesimals'' based on
operators in Hilbert space (see Section~\ref{two} above), and proceeds
to
\begin{quote}
explain why the formalism of nonstandard analysis is inadequate
(Connes \cite[p.~6207]{Co95}).
\end{quote}
Connes points out the following three aspects of Robinson's hyperreals
(the explicit list is ours, not Connes'):
\begin{enumerate}
\item
a nonstandard hyperreal ``cannot be exhibited" (the reason given being
its relation to non-measurable sets);
\item
``the practical use of such a notion is limited to computations in
which the final result is independent of the exact value of the above
infinitesimal.  This is the way nonstandard analysis and ultraproducts
are used";
\item
the hyperreals are commutative.
\end{enumerate}

We will argue that two of the three arguments given by Connes with
regard to the inadequacy of Robinson's theory may be weaker than he
claims since they appear to apply similarly to his own calculus.  If
so, Connes' claim that his ``theory of infinitesimal variables is
completely different'' may be exaggerated.  For a sequel paper, see
(Katz, Kanovei, Mormann 2013 \cite{KKM}).

Connes proceeds to establish a dictionary on page 6208, relating
classical and quantum notions.  The last quantum item in his
dictionary is the Dixmier trace, corresponding to ``integrals of
infinitesimals of order 1".  On pages 6210-6211, Connes presents a
pair of technical difficulties with the theory, and states:
\begin{quote}
Both of these problems are resolved by the Dixmier trace
(Connes \cite[p.~6211]{Co95}).
\end{quote}
On page 6212, the Dixmier trace~${\rm Tr}_\omega$ is defined as any
limit point of suitable functionals, where ``the choice of the limit
point is encoded by the index~$\omega$".  Connes goes on to state that
\begin{quote}
for measurable operators~$T$, the value of~${\rm Tr}_\omega(T)$ is
\emph{independent} of~$\omega$ and this common value is the
appropriate integral of~$T$ in the new calculus (Connes
\cite[p.~6213]{Co95}) [emphasis added--the authors]
\end{quote}
Such \emph{independence} would seem to relativize the impact of the
objection~(2) raised above, affirming precisely a similar
independence, for~${\rm Tr_\omega}$.

An alternative construction of the trace by Connes, while not explicit
as it relies on the axiom of choice, is satisfactory since it is given
by a limit process which is universally measurable (see \cite{Me});
however the construction depends on the continuum hypothesis (see
Remark~\ref{CH}).  The foundational status of the trace tends to
relativize the objection~(1).

Note that in Connes' approach, an infinitesimal is given by a compact
operator, and of course many of them can be naturally exhibited (this
is at variance with the general framework, since in order to define,
via the Dixmier trace, an interesting trace of the space of
infinitesimals of degree~$1$, one exploits the axiom of choice).
Similarly, many Robinson infinitesimals can also be naturally
exhibited (see Section~\ref{four}).  Thus, two-thirds of Connes'
critique of Robinson's infinitesimal approach can be said to be
incoherent, in the specific sense of not being coherent with what
Connes writes (approvingly) about his own infinitesimal approach.  The
remaining objection (3), namely the non-commutativity of the
hyperreals, is by far the most convincing of the three objections.  As
Connes wrote, ``The uniqueness of the separable infinite dimensional
Hilbert space cures that problem, and variables with continuous range
coexist happily with variables with countable range, such as the
infinitesimal ones.  The only new fact is that they do not commute''
(Connes \cite[Section~2]{Co08}).

Note that the fact of working with non-commutative algebras allows
Connes to deal with many interesting applications: foliations, set of
irreducible representations of a noncommutative discrete group, etc.

In Connes' approach, noncommutativity ensures the coexistence of
variables having a Lebesgue spectrum with infinitesimal variables.  To
elaborate, consider the map~$ x\rightarrow f(x)$ from~$[0,1]$ into
itself, where~$f$ is monotone increasing.  By multiplication it
defines a self adjoint operator from~$L^2([0,1])$ into itself with
Lebesgue spectrum.  Now consider a map~$x \rightarrow A(x)$ from
$[0,1]$ into itself having only a countable set of values, for
instance, the set~$\left\{ \frac{1}{n} :\; n \in
\mathbb{N}\setminus\{0\} \right\}$. Then, for infinitely many values
of~$n$,~$A^{-1}(\{1/n\})$ is uncountable with positive measure.  For
such~$n$,~$1/n$ is an eigenvalue of the multiplication operator by~$A$
whose eigenspace is infinite dimensional.  Thus,~$A$ cannot define an
infinitesimal.  In the noncommutative picture, a variable is given by
a self adjoint operator of~$B L^2([0,1])$, and an infinitesimal is
given by a compact self adjoint operator~$K$ of~$B L^2([0,1])$.  The
set of eigenvalues of~$K$ forms a sequence, it tends to zero and the
eigenspaces associated to nonzero eigenvalues are finite dimensional.

The remarkable coincidence of dates: both Robinson's book \cite{Ro66}
and Dixmier's article~\cite{Di66} were published in~1966, suggests
that a certain cross-fertilisation of ideas may have taken place.  Can
this particular subparagraph of the early version of Noncommutative
geometry, as it appeared in the nineties, be thought of having an
important precursor in Robinson's monumental work \cite{Ro66}, in
addition to Dixmier's 2-page article~\cite{Di66}?

\begin{remark}
\label{CH}
On pages 303-308 of his book \cite{Co94}, Connes presents a detailed
construction of the Dixmier trace.  On page 305, he chooses a positive
linear form~$L$ on the vector space of bounded continuous functions
on~$\R_+\setminus\{0\}$, such that~$L(1)=1$, and which is zero on the
subspace of functions vanishing at infinity.  The construction of the
trace is eventually proved to be independent of such a choice.  The
choice of~$L$ relies on the Hahn-Banach theorem, of similar
foundational status.

On the other hand, using a theorem of Mokobodzki for the Cantor
set~$\{0,1\}^\N$, Connes constructed a Dixmier trace by a limit
process which is universally measurable, based on foundational
material including the continuum hypothesis (CH) (though it is
impossible to exhibit a representative of this Dixmier trace).
However, CH is generally considered to be a strong foundational
hypothesis.  Thus, all ultrapower-type models of the hyperreals
starting from $\R^\N$ become isomorphic if one assumes CH (see
\cite{Eb}).
\end{remark}

\section{Exhibitable Robinson infinitesimal}
\label{four}

On the connection between infinitesimals and non-measurable sets,
Connes writes as follows:
\begin{quote}
Every non-standard real determines canonically a Lebesgue
non-measurable subset of the interval~$[0,1]$, so that it is
impossible to {\bf exhibit} a single one (Connes \cite[p.~211]{Co97})
[emphasis added--authors].
\end{quote}
The expression ``single one'' apparently refers to ``non-standard
real'', and not ``non-measurable set'', and has been widely
interpreted as such in the literature.

Thus, as recently as 2009, M. de Glas claims as a matter of fact that
each hyperreal is canonically associated with a non-measurable subset
of the real line \cite[p.~194]{De}.

Meanwhile, Keisler's {\em Elementary calculus} \cite{Ke} on page~913,
line~3, {\bf exhibits} an explicit representative of an equivalence
class defining an infinitesimal hyperreal (in the context of the
ultrapower construction following Luxemburg).  Are they exhibitable or
non-exhibitable?  The explanation of the apparent paradox is as
follows.

Given an infinite hypernatural~$H$, we can consider all
subsets~$A\subset\N$ whose natural hyperreal
extension~$^{*}\!A\subset{}^{*}\N$ contains~$H$.  The resulting
collection defines an ultrafilter on $\N$ which, viewed as a subset of
[0,1] via dyadic representation, produces ``canonically'' a
non-measurable set.  Similarly, given an arbitrary infinite
hyperreal~$H$, we can consider its integer part%
\footnote{Here~$^{*}[H]$ is the image of $H$ under~$^{*}\![\;\,]$, the
natural extension of the integer part function $[\;\,]$ on $\R$.}
\begin{equation}
\label{61}
^{*}[H],
\end{equation}
and proceed ``canonically'' as before.  Given an
infinitesimal~$\epsilon$, we can consider the
hyperreal
\begin{equation}
\label{51}
H=\frac{1}{\epsilon}
\end{equation} 
and proceed ``canonically'' as in \eqref{61}.  Finally, given a
non-standard finite hyperreal~$x$, we can consider the infinitesimal
difference
\begin{equation}
\label{31}
\epsilon=x-{\rm st}(x),
\end{equation}
where ``st'' is the standard part function (see Section~\ref{rob}),
and proceed ``canonically'' as in~\eqref{51}.

The catch (hitch?) is implicit in the meaning of the word
``canonical''.  The construction is only canonical once one has in
place the powerful new principles of reasoning such as the transfer
principle (a mathematical implementation of Leibniz's heuristic law of
continuity), the standard part function, etc., so that it is possible
to talk about natural extensions of standard objects.

Thus, while it is possible to exhibit a representative of a Robinson
infinitesimal, the choices involved in constructing the hyperreals
make the corresponding ``canonical'' non-measurable set, in fact
nonexhibitable.

In the presence of an ultrafilter construction of the hyperreal line
(see Section~\ref{rob} above as well as (Keisler \cite[p.~911]{Ke}),
one can prove the following result.

\begin{theorem}
A choice of a Connes infinitesimal canonically produces a
non-measurable set.  Such a set cannot be exhibited \cite{St}.
\end{theorem}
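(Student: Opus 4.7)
The plan is to bridge Connes' infinitesimals to Robinson's via the spectrum, and then invoke the canonical procedure for Robinson infinitesimals already established in the preceding discussion (equations \eqref{61}, \eqref{51}, \eqref{31}).

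First I would start with a Connes infinitesimal, namely a compact self-adjoint operator $K$ on the separable infinite-dimensional Hilbert space, as described in Section~\ref{two} and the previous section. Its spectrum (after symmetrisation) is a sequence $\langle \mu_n(K):n\in\N\rangle$ of eigenvalues tending to zero. Interpreting this sequence as an element of $\R^\N$ and passing to the quotient modulo the maximal ideal $\mathcal{M}$ of~\eqref{23}, I obtain a Robinson infinitesimal $\epsilon\in \RRR$, since the defining sequence is null. This is precisely the bridge pointed out in Section~\ref{two}, following Albeverio et al.~\cite{Al96}: symmetrisation followed by Spec recovers a sequence, whose equivalence class in $\R^\N/\mathcal{M}$ is a hyperreal infinitesimal.

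Next I would feed this Robinson infinitesimal $\epsilon$ into the canonical chain already described in Section~\ref{four}: form $H=1/\epsilon$ as in~\eqref{51}, extract its integer part ${}^{*}[H]$ as in~\eqref{61}, and then consider the collection of $A\subset\N$ whose natural extension ${}^{*}\!A$ contains ${}^{*}[H]$. This yields an ultrafilter on $\N$, and viewing $\N$-subsets as points of $\{0,1\}^\N\cong[0,1]$ via dyadic expansion produces the desired subset $S\subset[0,1]$. Appealing to the standard argument (an ultrafilter on $\N$, pulled through dyadic representation, is Lebesgue non-measurable, since it is invariant under finite flips and thus falls under a Steinhaus-type dichotomy), the set $S$ is non-measurable. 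The word ``canonically'' is used in the same qualified sense as in Section~\ref{four}: once the hyperreal framework (the choice of $\mathcal{M}$, the transfer principle, and the standard part function) is in place, no further choices intervene.

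The hard part, and the second clause of the theorem, is the assertion that such a set $S$ \emph{cannot} be exhibited. Here I would appeal to the reference \cite{St}, which shows in the Solovay model (or via related descriptive set-theoretic arguments) that no Lebesgue non-measurable subset of $[0,1]$ can be produced without essential use of a non-constructive principle such as the axiom of choice. Since our construction factors through the choice of the maximal ideal $\mathcal{M}\supset\fez$, which exists only by such a principle, the resulting $S$ inherits the non-exhibitability, even though the starting Connes infinitesimal $K$ itself can be perfectly concrete. This last point is exactly the subtlety I expect to be the main obstacle to formulate cleanly: one must separate the \emph{exhibitability} of the initial compact operator $K$ from the \emph{non-exhibitability} of the output set $S$, and locate the non-constructive ingredient precisely in the spectrum-to-hyperreal passage (the choice of $\mathcal{M}$) rather than in $K$ or in the dyadic identification.
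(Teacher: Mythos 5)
Your proposal is correct and follows essentially the same route as the paper: pass from the compact operator to the null sequence of its eigenvalues (ordered by decreasing modulus), read this sequence as a Robinson infinitesimal in the ultrapower, and then run the canonical chain of Section~\ref{four} via \eqref{51} to obtain the non-measurable ultrafilter set. You merely spell out details the paper leaves implicit, namely the dyadic non-measurability argument and the Solovay-model justification of non-exhibitability from \cite{St}.
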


\begin{proof}
A Connes infinitesimal is a compact operator~$T$.  The Dixmier trace
is applied to the spectrum of~$|T|$.  The spectrum can be canonically
ordered in decreasing order of~$|\lambda_i|$, producing a
sequence~$(\lambda_i)$ which tends to zero.  As such, it represents a
Robinson infinitesimal \cite[p.~911]{Ke}, and we proceed as in
\eqref{51}.
\end{proof}

\section*{Acknowledgment}

We are grateful to V.~Kanovei, S.~Shelah, and D.~Sherry for helpful
comments.

\bigskip

\textbf{Mikhail G. Katz} is Professor of Mathematics at Bar Ilan
University, Ramat Gan, Israel.  His joint study with P.~B\l aszczyk
%blaszczyk
and D.~Sherry, \emph{Ten misconceptions from the history of analysis
and their debunking} is due to appear in \emph{Foundations of
Science}.  A joint study with A.~Borovik, \emph{Who gave you the
Cauchy--Weierstras tale?  The dual history of rigorous calculus}
appeared in \emph{Foundations of Science}.  A joint study with
A.~Borovik and R.~Jin, \emph{An integer construction of
infinitesimals: Toward a theory of Eudoxus hyperreals} appeared in
\emph{Notre Dame Journal of Formal Logic} \textbf{53} (2012).

%A joint study with Karin Katz entitled \emph{A Burgessian critique of
%nominalistic tendencies in contemporary mathematics and its
%historiography}, appeared in \emph{Foundations of Science} \textbf{17}
%(2012), no.~1, 51--89.  A joint study with Karin Katz entitled
%\emph{Meaning in classical mathematics: is it at odds with
%Intuitionism?} appeared in {\em Intellectica\/} \textbf{56} (2011),
%no.~2, 223-302.  A joint study with Karin Katz, \emph{Stevin numbers
%and reality}, appeared in \emph{Foundations of Science} (online
%first).  A joint study with David Sherry entitled \emph{Leibniz's
%infinitesimals: Their fictionality, their modern implementations, and
%their foes from Berkeley to Russell and beyond} is due to appear in
%\emph{Erkenntnis}.  A joint study with David Tall entitled \emph{The
%tension between intuitive infinitesimals and formal mathematical
%analysis}, appeared as a chapter in a book edited by Bharath Sriraman,
%\emph{Crossroads in the History of Mathematics and Mathematics
%Education}, {\em The Montana Mathematics Enthusiast Monographs in
%Mathematics Education\/} \textbf{12}, Information Age Publishing,
%Charlotte, NC, 2012.

\bigskip

\textbf{Eric Leichtnam} is Director of Research at CNRS at the
\emph{Institut Math\'ematique de Jussieu--Chevaleret} in Paris.  He
has worked in Noncommutative Geometry since 1993.  In 1990 he defended
an \emph{Habilitation \`a Diriger des Recherches} in the area of PDE
at the University of Paris 11.  From 1980 to 1985 he was a student at
the \emph{Ecole Normale Sup\'erieure}.

\end{document}